\documentclass[12pt,a4paper]{article}

\usepackage[utf8]{inputenc}
\usepackage{lmodern}
\usepackage{microtype}
\usepackage{amsmath,amssymb,amsthm,mathtools}
\usepackage{geometry}
\usepackage{hyperref}
\usepackage{enumitem}

\hypersetup{colorlinks=false,linkcolor=blue,citecolor=blue,urlcolor=blue}

\usepackage{xparse}

\usepackage{booktabs,array,longtable}

\newtheorem{theorem}{Theorem}
\newtheorem{proposition}[theorem]{Proposition}

\theoremstyle{definition}
\newtheorem{definition}[theorem]{Definition}
\theoremstyle{remark}
\newtheorem{remark}[theorem]{Remark}
\newtheorem{example}[theorem]{Example}

\newcommand{\N}{\mathbb{N}}

\DeclareMathOperator{\fabop}{fab}

\newcommand{\fab}[1]{%
  \fabop\if\relax\detokenize{#1}\relax\else(#1)\fi%
}

\DeclareMathOperator{\fabfiveop}{fabfive}

\newcommand{\fabfive}[1]{%
  \fabfiveop\if\relax\detokenize{#1}\relax\else(#1)\fi%
}

\DeclareMathOperator{\gcdop}{gcd}

\title{A Divisor Parametrization for the Erd\H{o}s--Straus Conjecture}
\author{M. Bello-Hernández \and M. Benito  \and E. Fernández}
\date{\today}

\begin{document}
\maketitle

\centerline{\bf Abstract}
\noindent
We study representations of \(1/n\) as a sum of three unit fractions whose denominators are all divisible by a prescribed integer \(m\). After scaling, this is equivalent to representing \(m/n\) as a sum of three unit fractions. Our main focus is the Erd\H{o}s--Straus case \(m=4\). We introduce a divisor-based function \(\mathrm{fab}(n,a,b)\), prove that its admissible parameters recover exactly the decompositions of \(1/n\) with all three denominators divisible by \(4\), and compare this parametrization with well-known Type I/II descriptions.

We also relate the construction to a
shifted cubic equation and to the surface
\[
  P(u,v,w)=uvw-u-v,
\]
whose subfamily \(P(\alpha+1,4\beta+3,4\gamma+3)\) gives a natural source of
examples but contains no perfect squares. Finally, we prove a translation
invariance property of \(\fab{}\), derive a modular sieve, and report
computational evidence: all primes \(p\equiv1\pmod4\) with \(p<10^{14}\) are
detected by \(\fab{p,a,b}\) with \(1\le a,b\le 11\), although some composite
values require larger parameters. We conclude with comparisons to Bradford's
two-variable reduction and Ventas' FCT sources for the \(5/n\) setting.

\section{Introduction}

The Erd\H{o}s--Straus conjecture asserts that, for every integer $n\ge2$, there
exist positive integers $x,y,z$ such that
\begin{equation}\label{eq:ES}
        \frac4n=\frac1x+\frac1y+\frac1z.
\end{equation}
If the conjecture is known for a divisor $p$ of $n$, then a decomposition for
$4/p$ can be scaled to one for $4/n$.  Thus, as usual, it is enough to consider
prime values of $n$.  We refer to Mordell~\cite{Mor}, Yamamoto~\cite{Yam},
Vaughan~\cite{Vau}, Schinzel~\cite{Sch}, Elsholtz--Tao~\cite{ElsTao}, Bradford~\cite{Bra}, and our manuscript~\cite{BelBenFer} and the references therein  for
background on the conjecture and on other parametrizations of its solutions.

We present several complementary ways to generate and understand Erdős--Straus decompositions: (i) a general divisor-based procedure encoded by $\fab{}$, (ii) the well-known Type I/II parametrizations and a basic rigidity obstruction for finite congruence covers, (iii) an algebraic-geometric model via the shifted cubic equation and the surface $uvw-u-v=n$, and (iv) computational consequences, in particular a translation-invariant modular sieve.

We compare this divisor search with the well-known Type~I and Type~II forms.
It is easy to see that fixing three of the four parameters to lie in a finite set yields only finitely many congruence classes, which cannot cover all primes $p\equiv1\pmod4$.  This contrasts with the empirical effectiveness observed in the small search window of $1\leq a,b\leq11$ for the primes tested in our computations.

A second theme is the connection with the shifted equation
\[
        \frac1n=\frac1{n+a}+\frac1{n+b}+\frac1{n+c}
\]
and with the polynomial surface
\[
        P(u,v,w)=uvw-u-v.
\]
The decompositions produced by $\fab{}$ give rational points on this surface.
The integral subfamily $P(a+1,4b+3,4c+3)$ provides a simple source of examples,
but it also has a structural limitation: it contains no perfect squares.  This
is useful to keep in mind when comparing the polynomial parametrization with the
more flexible divisor search.

In addition, we show that the ideas developed here can be adapted to obtain decompositions of fractions of the form \(5/n\), suggesting that the method is not restricted to the classical case \(4/n\). We obtain the Bradford's two-variable reduction in \(5/n\) setting.

In the computational section we write down a 
modular lifting principle for bounded searches and explain how the residue-class
sieve is used in the computations.  It also clarifies why the bound
$1\leq a,b\leq 11$ should be interpreted cautiously: although it appears very
effective for the tested primes, there are composite values not detected  in that
window but detected as soon as one allows $a$ or $b$ to be slightly larger.

\section{A Divisor Identity and the Function $\fab{n,a,b}$}\label{sec:fab}

Let $n,k\in\N$ and put $A=n(n+k)$. Then
\begin{equation}\label{eq:divisor-identity}
        \frac1n=\frac1{n+k}+\frac{k}{A+d}+\frac{k}{A+d_1}
\end{equation}
is equivalent to
\(dd_1=A^2=n^2(n+k)^2.
\) See \cite{HuangVaugham}.

\begin{definition}\label{def:fab}
Let $n,a,b\in\N$. Define the function $\fab{n,a,b}$ to be the least positive divisor $k$ of $a+bn$ such that
\begin{equation}\label{eq:fab-conditions}
        k\equiv3\pmod4,
        \qquad
        4b\mid \frac{a+bn}{k}(n+k),
        \qquad
        4a\mid n\frac{a+bn}{k}(n+k),
\end{equation}
if such a divisor exists; otherwise set $\fab{n,a,b}=0$. We refer to $k,a,b$ satisfying \eqref{eq:fab-conditions} as \textit{admissible values} for $n$.
\end{definition}

\begin{proposition}\label{prop:fab}
If $\fab{n,a,b}=k>0$, then
\begin{equation}\label{eq:fab-decomp-one}
        \frac1n
        =\frac1{n+k}
        +\frac1{\dfrac{(a+bn)(n+k)}{bk}}
        +\frac1{\dfrac{(a+bn)(n+k)n}{ak}}.
\end{equation}
If moreover $n\equiv1\pmod4$, then \eqref{eq:fab-decomp-one} gives a solution of
\eqref{eq:ES}.
\end{proposition}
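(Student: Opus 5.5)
The plan is to verify \eqref{eq:fab-decomp-one} directly as a rational identity, then check that the three denominators are positive integers, and finally divide everything by $4$ using the congruence conditions. The identity \eqref{eq:divisor-identity} is not needed; the computation is elementary.

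\textbf{Step 1 (the identity).} By Definition~\ref{def:fab}, $k>0$ divides $a+bn$, so the denominators are well defined and positive. First compute
\[
\frac1n-\frac1{n+k}=\frac{k}{n(n+k)}.
\]
Then put the last two terms of \eqref{eq:fab-decomp-one} over the common denominator $(a+bn)(n+k)n$:
\[
\frac{bk}{(a+bn)(n+k)}+\frac{ak}{(a+bn)(n+k)n}
=\frac{k(bn+a)}{(a+bn)(n+k)n}=\frac{k}{n(n+k)}.
\]
Comparing the two computations gives \eqref{eq:fab-decomp-one} as an identity of rationals.

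\textbf{Step 2 (integrality).} Write $q=(a+bn)/k$, which is a positive integer because $k\mid a+bn$. The second denominator is $q(n+k)/b$, and the third is $nq(n+k)/a$. The conditions in \eqref{eq:fab-conditions} say $4b\mid q(n+k)$ and $4a\mid nq(n+k)$. Hence both quotients are integers, and in fact both are divisible by $4$. The first denominator $n+k$ is trivially a positive integer.

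\textbf{Step 3 (Erd\H{o}s--Straus).} Assume $n\equiv1\pmod4$. Since $k\equiv3\pmod4$, we get $n+k\equiv0\pmod4$. Together with Step~2, all three denominators $X_1,X_2,X_3$ in \eqref{eq:fab-decomp-one} are then multiples of $4$. Multiplying \eqref{eq:fab-decomp-one} by $4$ gives
\[
\frac4n=\frac1{X_1/4}+\frac1{X_2/4}+\frac1{X_3/4},
\]
with positive integer denominators. This is a solution of \eqref{eq:ES}.

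The only point that needs care is Step~2: one must read the divisibility conditions as statements about $q(n+k)$ with $q=(a+bn)/k$ already an integer. Once that is done, the factor $4$ comes for free from the conditions, and the congruence on $k$ handles the first denominator.
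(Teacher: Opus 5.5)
Your proof is correct; it differs from the paper's only in how the rational identity is obtained.

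The paper does not clear denominators. It reads the identity off from the divisor identity \eqref{eq:divisor-identity} with $A=n(n+k)$, choosing $d=\frac ab(n+k)$ and $d_1=\frac ba\,n^2(n+k)$. Then $dd_1=A^2$, $A+d=(n+k)(a+bn)/b$ and $A+d_1=n(n+k)(a+bn)/a$, so the two $k/(A+\cdot)$ terms become exactly your second and third unit fractions. That route shows where the formula comes from: $\fab{}$ is a special choice of divisor pair in the $dd_1=A^2$ framework, with $a/b=d/(n+k)$. This is what motivates the ``divisor parametrization'' viewpoint of the paper.

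Your direct verification is shorter and self-contained. It needs neither the cited identity nor the paper's assertion that $d,d_1$ are divisors of $A^2$.

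Your Steps 2 and 3 do the same job as the paper's one-line final sentence, but more explicitly. You write the denominators as $q(n+k)/b$ and $nq(n+k)/a$ with $q=(a+bn)/k$. You also say plainly that $n\equiv1\pmod4$ is used only to get $4\mid n+k$ from $k\equiv3\pmod4$, which the paper leaves implicit.
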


\begin{proof}
If $\fab{n,a,b}=k>0$, then 
\(        d=\frac ab(n+k)\) and $d_1=(b/a)n^2(n+k)$ are divisors of $A^2$.
Then \eqref{eq:divisor-identity} becomes
\begin{equation}\label{eq:special-identity}
        \frac1n
        =\frac1{n+k}
        +\frac{kb}{(n+k)(a+bn)}
        +\frac{ka}{n(n+k)(a+bn)}.
\end{equation} 
By \eqref{eq:fab-conditions}, the three denominators in \eqref{eq:fab-decomp-one} are
divisible by $4$.
\end{proof}

\begin{example}
For $n=5$, $\fab{5,1,1}=3$, and
\[
        \frac45=\frac12+\frac14+\frac1{20}.
\]
\end{example}

\begin{remark}[Prime squares]
The function $\fab{}$ also gives a very simple decomposition for the squares
of primes \(p\equiv3\pmod4\). Indeed, let
$        n=p^2,\,a=k=p,\, b=1.$
Then
\(\frac{a+bn}{k}=p+1,
\) \(n+k=p^2+p=p(p+1)\),  and \eqref{eq:fab-conditions} holds.
Hence
\(        \fab{p^2,p,1}>0.
\)

For primes \(p\equiv1\pmod4\), computations suggest that one can often find
a positive integer \(b\) such that \(\fab{p^2,p,b}>0\). This case appears to
be subtler: with \(n=p^2\), \(a=p\), one has
\[
        a+bn=p(bp+1),
\]
and one has to find a divisor \(k\equiv3\pmod4\) of \(bp+1\) satisfying the
remaining divisibility condition. We do not pursue this question here.
\end{remark}

\begin{theorem}[Completeness of the divisor identity]\label{teo:Completeness}
Let \(n\equiv1\pmod4\).  Suppose that
\begin{equation}
\label{eq:4div-decomp}
   \frac1n=\frac1x+\frac1y+\frac1z
\end{equation}
with \(4\mid x\), \(4\mid y\), \(4\mid z\).  Then there exist
\(a,b\in\mathbb N\) and an admissible divisor \(k\equiv3\pmod4\) such that
the divisor identity \eqref{eq:fab-decomp-one} associated with \(fab(n,a,b)\) reproduces exactly the
given decomposition.
\end{theorem}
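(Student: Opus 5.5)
The plan is to reverse-engineer the parameters directly from the given decomposition. Starting from \eqref{eq:4div-decomp} with $4\mid x,y,z$, I would first note that each of $x,y,z$ exceeds $n$, since every term is positive. I then single out one denominator, say $x$, and put $k:=x-n\in\mathbb N$. Since $x\equiv0\pmod 4$ and $n\equiv1\pmod4$, this gives $k\equiv3\pmod4$ automatically. With $A=n(n+k)$, the remaining two terms satisfy
\[
\frac1y+\frac1z=\frac1n-\frac1{n+k}=\frac{k}{n(n+k)},\qquad\text{that is,}\qquad n(n+k)(y+z)=kyz .
\]

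Next I choose $a,b$ so as to match the ratio of the last two denominators. In \eqref{eq:fab-decomp-one} the third denominator divided by the second equals $nb/a$. So I would take
\[
a=\frac{ny}{g},\qquad b=\frac{z}{g},\qquad g=\gcd(ny,z),
\]
and then possibly multiply both by a common factor $t\in\mathbb N$. This rescaling is harmless because the right-hand side of \eqref{eq:fab-decomp-one} is homogeneous of degree $0$ in $(a,b)$.

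A direct computation using the displayed relation then recovers the second and third denominators:
\[
\frac{(a+bn)(n+k)}{bk}=\frac{n(y+z)(n+k)}{zk}=y,
\qquad
\frac{(a+bn)(n+k)n}{ak}=\frac{n(y+z)(n+k)}{yk}=z .
\]
Hence \eqref{eq:fab-decomp-one} with these $k,a,b$ reproduces exactly $(x,y,z)$.

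It remains to check the conditions \eqref{eq:fab-conditions}. The key observation is that
\[
\frac{a+bn}{k}(n+k)=by,\qquad n\,\frac{a+bn}{k}(n+k)=az .
\]
Since $4\mid y$ and $4\mid z$, the conditions $4b\mid by$ and $4a\mid az$ hold at once. The congruence $k\equiv3\pmod 4$ is already established. So the only genuine requirement is that $k$ be a divisor of $a+bn$.

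This divisibility is the step I expect to be the main obstacle, because for the reduced pair $(ny/g,\,z/g)$ the quotient $(a+bn)/k=n(y+z)/(gk)$ need not be an integer. My fix is the rescaling freedom: replace $(a,b)$ by $(ta,tb)$ with $t=k/\gcd(k,a+bn)$. Then $k\mid t(a+bn)$, while the identities above and hence the reproduced denominators are unchanged. The divisibility conditions also survive, since they were verified in the scale-invariant form $by$, $az$.

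Finally, I would remark that Definition~\ref{def:fab} takes the \emph{least} admissible divisor. The theorem, however, only asserts that some admissible divisor $k$ exists for which the divisor identity \eqref{eq:fab-decomp-one} yields the given decomposition, and that is exactly what the construction provides.
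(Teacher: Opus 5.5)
Your proof is correct and is essentially the paper's argument: the paper also takes $k=x-n$, and it verifies admissibility through the same identities $\frac{a+bn}{k}(n+k)=by$ and $n\frac{a+bn}{k}(n+k)=az$ together with $4\mid y$, $4\mid z$. The only difference is how $(a,b)$ is described. The paper writes it in closed form, $g=\gcd(x,y)$, $b=x/g$, $a=(ky-nx)/g$, so that $a+bn=ky/g$ makes $k\mid a+bn$ automatic; you instead fix the ratio $a:b=ny:z$, which equals $(ky-nx):x$, and rescale by $t=k/\gcd(k,a+bn)$. Your rescaled pair in fact coincides with the paper's, because $\gcd(x/g,y/g)=1$.
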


\begin{proof}
Since all summands in \eqref{eq:4div-decomp} are positive, each denominator is
larger than $n$. In particular, $x>n$. Define
\begin{equation}
\label{eq:k}
   k=x-n.
\end{equation}
Since $4\mid x$ and $n\equiv 1\pmod 4$, we have
\(
   k=x-n\equiv 0-1\equiv 3\pmod 4.
\)
Moreover, $x=n+k$.

From \eqref{eq:4div-decomp}, we obtain
\begin{equation}\label{eq:z-relation-english}
   z(ky-nx)=nxy.
\end{equation}
Let
\begin{equation}
\label{eq:a_b}
   g=\gcdop(x,y),
   \qquad
   b=\frac{x}{g},
   \qquad
   a=\frac{ky-nx}{g}.
\end{equation}
The positivity of $a$ follows from \eqref{eq:z-relation-english}. Therefore $a,b\in\mathbb N$.

We now compute
\(  a+bn
   =ky/{g}.
\)
Since $g\mid y$, it follows that
\begin{equation}
\label{eq:div-k}
   k\mid a+bn.
\end{equation}
Write
\(
   q={(a+bn)}/{k}.
\)
Then \(   q={y}/{g}.\)
Consequently,
\(
   q(n+k)=\frac{y}{g}x=\frac{x}{g}y=by.
\)
Since $4\mid y$, we get
\begin{equation}\label{eq:first-admissibility-english}
   4b\mid q(n+k).
\end{equation}

On the other hand, dividing by $g$ in \eqref{eq:z-relation-english} and using the definition of $a$, we obtain
\[
az=\frac{nxy}{g}.
\]
Also
\({nxy}/{g}=nq(n+k).
\)
Thus
\(
   nq(n+k)=az.
\)
Since $4\mid z$, it follows that
\begin{equation}\label{eq:second-admissibility-english}
   4a\mid nq(n+k).
\end{equation}
Therefore, \eqref{eq:div-k}, \eqref{eq:first-admissibility-english}, and \eqref{eq:second-admissibility-english} say that  $k$ satisfies the admissibility conditions for the pair $(a,b)$.

It remains to check that the associated identity recovers the original triple
$(x,y,z)$. According to \eqref{eq:fab-decomp-one}, the first denominator is
\(
   n+k=x.
\)
The second denominator is
\[
   \frac{(a+bn)(n+k)}{bk}
   =
   \frac{(ky/g)x}{(x/g)k}
   =y.
\]
The third denominator is
\[
   \frac{(a+bn)(n+k)n}{ak}
   =
   \frac{(ky/g)xn}{ak}
   =
   \frac{nxy}{ag}.
\]
By \eqref{eq:z-relation-english} and $a=(ky-nx)/g$, we have $azg=nxy$, and hence
\[
   \frac{nxy}{ag}=z.
\]
Thus the identity \eqref{eq:fab-decomp-one} associated with $\fab{}$ is exactly \eqref{eq:4div-decomp}.
\end{proof}

\begin{remark} In the theorem above, we do not assert that $\fab{n,a,b}$ coincides with
the value of $k$ given by \eqref{eq:k}, with $a,b$ as in \eqref{eq:a_b}.
We only establish that $k\ge \fab{n,a,b}>0$ for those values of $a,b$.

\end{remark}

\section{Well-known Type I/II Forms and the Cubic Equation}\label{sec:type}

We shall only use the following standard parametrization as background;
see Mordell~\cite[p.~287]{Mor} or Yamamoto~\cite{Yam}.
In order to avoid a conflict with the parameters of the function
\(\fab{}\), we denote the parameters by capital letters
\(A,B,C,D\). Thus, for prime \(n\), a solution of \((1)\) exists if and
only if one can find positive integers \(A,B,C,D\) such that either
\begin{equation}
\label{eq:typeI-cond1}
   (4ABC-1)D=(A+B)n,
\end{equation}
or
\begin{equation}
\label{eq:typeII-cond1}
   (4ABC-1)D=An+B.
\end{equation}
Dividing by \(ABCDn\) gives, respectively,
\begin{equation}
   \frac4n
   =
   \frac1{ABCn}
   +
   \frac1{BCD}
   +
   \frac1{ACD},
\label{eq:typeI-cond}
\end{equation}
or
\begin{equation}
   \frac4n
   =
   \frac1{ABCn}
   +
   \frac1{BCD}
   +
   \frac1{ACDn}.
\label{eq:typeII-cond}
\end{equation}
Thus \eqref{eq:typeI-cond} has exactly one denominator divisible by \(n\), whereas
\eqref{eq:typeII-cond} has two.

\begin{remark}
To compare these formulae with the divisor identity associated with
\(\fab{}\) in Proposition~\ref{prop:fab}, one has to pass from decompositions of \(4/n\) to
decompositions of \(1/n\). Let \(a_{\mathrm f}\) and \(b_{\mathrm f}\) denote the two parameters of
the choice function \(\fab{n,a_{\mathrm f},b_{\mathrm f}}\), in order to
distinguish them from the parameters \(A,B,C,D\). The
completeness construction of Theorem~\ref{teo:Completeness} makes the comparison explicit.
For instance, put
\(  G=\gcd(An,D).
\)
If in either type we choose
\(   n+k=4ABCn,
\)
then
\(   k=n(4ABC-1),\,
   b_{\mathrm f}=\frac{An}{G}.
\)
Moreover, in type I decomposition $a_{\mathrm f}={Bn^2}/{G}$, while in type II decomposition $a_{\mathrm f}={Bn}/{G}$.

Likewise, by interchanging the order of the summands in the decompositions displayed in \eqref{eq:typeI-cond} and \eqref{eq:typeII-cond}, one obtains values of $k$, $a_{\mathrm{f}}$, and $b_{\mathrm{f}}$ for which the decompositions associated with these parameters, as prescribed by Proposition~\ref{prop:fab}, agree with the decompositions given there.

As noted above, these decompositions do not imply that $\fab{n,a_{\mathrm{f}},b_{\mathrm{f}}}=k$; they only show that
\[
0<\fab{n,a_{\mathrm{f}},b_{\mathrm{f}}}\le k.
\]
\end{remark}

The following elementary observation explains why fixing most of the
parameters in \eqref{eq:typeI-cond1} or \eqref{eq:typeII-cond1} cannot by itself lead to a finite congruence
cover for all primes \(p\equiv1\pmod4\). For a proof of this result, we refer the reader to \cite{BelBenFer}.

\begin{theorem}\label{thm:noresclasses}
Fix finite sets of triples of positive integers for the four possible choices of three fixed parameters. Then the corresponding finite-parameter subfamilies of either \eqref{eq:typeI-cond1} or \eqref{eq:typeII-cond1} cannot cover all primes \(p\equiv1\pmod4\).

\end{theorem}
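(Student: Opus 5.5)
The idea is to show that once three of the four parameters \(A,B,C,D\) range over a finite set, each choice of fixed triple, together with the remaining free parameter, forces \(n\) into a single residue class modulo a fixed modulus. The fixed parameters depend only on the triple, not on \(n\). A finite union of such classes is then compared with the primes \(p\equiv1\pmod4\), and Dirichlet's theorem yields an uncovered prime.

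\textbf{Step 1 (one fixed triple gives one residue class).} Consider the Type~I equation \eqref{eq:typeI-cond1}, \((4ABC-1)D=(A+B)n\).
\begin{itemize}
\item If \(A,B,C\) are fixed, put \(M=4ABC-1\). The equation forces \((A+B)n\equiv0\pmod M\). Since \(\gcd(A,M)=\gcd(B,M)=1\), this says \(n\equiv0\pmod{M/\gcd(M,A+B)}\). For prime \(n\) this is either automatic, which happens exactly when \(M\mid A+B\), or it forces \(n\) to be one of finitely many primes. But \(M\mid A+B\) is impossible, because \(4ABC-1>A+B\) for positive integers.
\item If \(D\) is fixed together with two of \(A,B,C\), I would reduce modulo \(D\) or modulo the fixed factor. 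For example, with \(A,B,D\) fixed the equation reads \((4ABC-1)D\equiv0\pmod{A+B}\) together with \(n=(4ABC-1)D/(A+B)\). Then \(n\equiv -D/(A+B)\) modulo \(4ABD/\gcd\), which is a single class whose modulus is independent of the free parameter \(C\).
\end{itemize}
The Type~II equation \eqref{eq:typeII-cond1} is handled the same way. There \(An+B\equiv0\pmod{4ABC-1}\) when \(A,B,C\) are fixed. When \(D\) is fixed, one reduces \(An=(4ABC-1)D-B\) modulo the fixed quantities to get \(An\equiv -D-B\) modulo a fixed multiple of \(AB\) or of \(D\).

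\textbf{Step 2 (check that each class omits primes \(\equiv1\pmod4\)).} In each case above I would verify that the resulting class \(r\bmod Q\) is incompatible with some class \(c\bmod Q'\) that contains infinitely many primes \(\equiv1\pmod4\). The key example is \(n\equiv -D\cdot(\text{unit})\pmod{4\cdot\ldots}\). Since the identity \eqref{eq:ES} is really a quadratic-residue phenomenon (Mordell's observation), the natural choice is: each covering class consists of \(n\) with \(-n\) lying in a specific class times a square, or equivalently these classes avoid the quadratic-residue classes. I would take \(Q'\) to be the product \(L\) of all moduli arising from the finite set, together with \(4\).

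\textbf{Step 3 (find an uncovered class).} The union of the finitely many classes is a union of classes modulo \(L\). I would exhibit a class \(c\bmod L\) with \(c\equiv1\pmod4\), \(\gcd(c,L)=1\), and \(c\) a quadratic residue modulo every prime dividing \(L\); the class \(c=1\) is the natural candidate. The main obstacle is to verify that \(n\equiv1\pmod L\) satisfies none of the congruences from Step 1, that is, that each obtained class has the form \(n\equiv -(\text{positive combination})\) and never contains \(1\) modulo a full square-compatible modulus. This is the classical Mordell argument: for \(n\equiv1\pmod{4ABC\ldots}\), the congruence \((A+B)n\equiv0\pmod{4ABC-1}\) and its analogues would force \(-1\), or \(-A/B\) with \(4ABC-1\) considered, to be a square modulo \(4ABC-1\). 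I expect to derive a contradiction from Jacobi symbols using \(4ABC-1\equiv3\pmod4\). Finally, Dirichlet's theorem supplies infinitely many primes \(p\equiv1\pmod L\), and none of them is covered.
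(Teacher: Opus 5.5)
The paper does not prove this statement itself; it defers to the authors' earlier manuscript. So your argument has to stand on its own. Its skeleton is right: finitely many congruence classes plus finitely many exceptional $n$, tested against the class $n\equiv1\pmod L$ with $4\mid L$, then Dirichlet. But the step that carries the whole proof is not done, and the route you sketch for it does not work as described.

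You defer as ``the main obstacle'' the check that $n\equiv1\pmod L$ lies in none of the classes. You propose to get it from Jacobi symbols, by showing that $-1$ or $-A/B$ would have to be a square modulo $4ABC-1$. But substituting $n\equiv1$ into $(A+B)n\equiv0\pmod{4ABC-1}$ gives $A\equiv-B$, i.e.\ $-A/B\equiv1$, which \emph{is} a square. So no quadratic-residue contradiction arises there. Your Step~2 assertion that the covered classes ``avoid the quadratic-residue classes'' is a stronger Mordell-type claim that you neither need nor prove.

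Step~1 is also incomplete: it only treats the cases in which $C$ or $D$ is the free parameter. If $A$ or $B$ is free, reducing ``modulo $D$ or a fixed multiple of $AB$'' gives no constraint, because the free variable still appears.
\begin{itemize}
\item In Type~I, $B(4ACD-n)=An+D$ and $A(4BCD-n)=Bn+D$, so $n<4ACD$ or $n<4BCD$.
\item In Type~II with $A$ free, $A(4BCD-n)=B+D$, so again $n$ is bounded.
\item In Type~II with $B$ free, $B(4ACD-1)=An+D$. This is an infinite family $n\equiv -DA^{-1}\pmod{4ACD-1}$, whose modulus your sketch never produces.
\end{itemize}

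The missing step is elementary and uses size, not reciprocity. Reduce each equation modulo the natural modulus of its case: $4ABC-1$, $4ABD$, $4ACD-1$, $4BCD-1$ according as $D$, $C$, $B$, $A$ is free, using $4ACD\equiv1\pmod{4ACD-1}$ and its analogues. For $n\equiv1$ modulo that modulus, both Type~I and Type~II collapse respectively to
\[
A+B\equiv0\pmod{4ABC-1},\qquad A+B+D\equiv0\pmod{4ABD},\qquad A+D\equiv0\pmod{4ACD-1},\qquad B+D\equiv0\pmod{4BCD-1}.
\]
Each is impossible because the left side is positive and strictly smaller than the modulus, for instance $4AB-1-A-B\ge 3B-2>0$ and $4ABD>3ABD\ge A+B+D$.

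Now take $L$ to be the least common multiple of $4$ and all these moduli over the finitely many fixed triples. Then \emph{no} integer $n\equiv1\pmod L$ is covered, so you do not even need to discard finitely many exceptional primes. Any prime $p\equiv1\pmod L$, which exists by Dirichlet or by the elementary cyclotomic argument, is an uncovered prime $p\equiv1\pmod4$.
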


\begin{remark}\label{rem:contrast}
This obstruction contrasts with the computational behaviour of $\fab{}$: the authors
have verified that, for every tested prime
\[
        5\le p\equiv1\pmod4,
        \qquad p<10^{14},
\]
there exist $1\le a,b\le 11$ such that $\fab{p,a,b}>0$.
\end{remark}


The shifted decomposition
\begin{equation}\label{eq:shifted}
        \frac1n=\frac1{n+a}+\frac1{n+b}+\frac1{n+c}
\end{equation}
is equivalent, after clearing denominators, to
\[
        abc=n^2(2n+a+b+c),
\]
or
\[
        2n^3+(a+b+c)n^2-abc=0.
\]

\begin{remark}
The change of variables induced by the divisor identity in Proposition~\ref{prop:fab} separates the
geometric and arithmetic parts of the construction. Indeed, let
\(A,B,k,q\in \mathbb N\), and put
\[
   \alpha=k,\qquad
   \beta=\frac{A+nq}{B},\qquad
   \gamma=\frac{n^2(B+q)}{A}.
\]
Then a direct computation gives
\[
\alpha\beta\gamma
-
n^2(2n+\alpha+\beta+\gamma)
=
\frac{n^2}{AB}
(kq-A-Bn)(A+Bn+nq).
\]
Thus, in the positive range, the shifted cubic equation
\[
   \alpha\beta\gamma
   =
   n^2(2n+\alpha+\beta+\gamma)
\]
is equivalent, under this change of variables, to the linear relation
\[
   kq=A+Bn.
\]
This relation should not be confused with the admissibility conditions in
the definition of \(\fab{}\). It only ensures that the corresponding shifted
parameters lie on the cubic, possibly as rational points. To obtain an
integral decomposition one must also impose
\[
   4B\mid q(n+k),\qquad 4A\mid nq(n+k).
\]
Thus the divisor relation cuts out the cubic, whereas the remaining
admissibility conditions select the integral congruence-compatible points
produced by \(\fab{}\).
\end{remark}

A simple subfamily comes from
\begin{equation}
        P(u,v,w)=uvw-u-v.
        \label{eq:polManolo}
\end{equation}

If $P=P(u,v,w)>0$, then
\begin{equation}\label{eq:P-decomp}
        \frac1P
        =\frac1{P+v}
        +\frac1{u(wP+1)}
        +\frac1{P(wP+1)},
\end{equation}
and
\[
wP+1=(uw-1)(vw-1).
\]

In the shifted notation,
\[
        n=P,
        \qquad
        a=v,
        \qquad
        b=(uw-1)P+u,
        \qquad
        c=P^2w.
\]

For the congruence class relevant to \eqref{eq:ES}, define
\begin{equation}
\label{eq:PolyManolo}
 p(\alpha,\beta,\gamma)=P(\alpha+1,4\beta+3,4\gamma+3)
\end{equation}
that is,
\[
p(\alpha,\beta,\gamma)=(\alpha+1)(4\beta+3)(4\gamma+3)-(\alpha+1)-(4\beta+3).
\]

\begin{remark}
In \cite{BelBenFer} we show that the integral polynomial
subfamily \eqref{eq:PolyManolo}
can never account for square values of $n$.  This should be compared with the
 choice function $\fab{}$: as observed above, $\fab{}$ gives immediate
decompositions for $n=p^2$ when $p\equiv3\pmod4$, by taking $a=p$, $b=1$, and
$k=p$.  Thus the polynomial parametrization and the divisor search overlap, but
neither one should be viewed as a literal substitute for the other. Obviously, $k=4\gamma+3$, $a=\alpha+1$, and $b=1$ are admissible parameters for $n=p(\alpha,\beta,\gamma)$.
\end{remark}

\section{A $\fab{}$-Type Framework for the Sierpi\'nski--Schinzel Case}

The $\fab{}$ construction for the Erd\H{o}s--Straus equation is naturally
formulated in terms of decompositions of $1/n$ whose denominators are all divisible by $4$.
From this point of view, the appropriate analogue for the numerator $5$ problem is obtained
by considering decompositions
\[
\frac{1}{n}=\frac{1}{X}+\frac{1}{Y}+\frac{1}{Z},
\qquad 5\mid X,\quad 5\mid Y,\quad 5\mid Z,
\]
since, after writing
\[
X=5x,\qquad Y=5y,\qquad Z=5z,
\]
one immediately recovers an Egyptian decomposition of
\[
\frac{5}{n}=\frac{1}{x}+\frac{1}{y}+\frac{1}{z}.
\]
Thus, the numerator $5$ case can be studied through a parametrization of those
decompositions of $1/n$ in which the three denominators are multiples of $5$.

As in the Erd\H{o}s--Straus situation, the residue class that deserves special attention is the one
for which no uniform elementary decomposition is available. In the present setting, the cases
$n\equiv 0,2,3,4\pmod 5$ admit straightforward decompositions into three unit fractions,
whereas the genuinely difficult class is
\[
n\equiv 1 \pmod 5.
\]
This is the exact analogue of the residue class $n\equiv 1\pmod 4$ in the
Erd\H{o}s--Straus conjecture, and it is therefore the natural setting in which to introduce
a $\mathrm{fab}$-type device.

For $n\equiv 1\pmod 5$, the first denominator in the relevant identity will be of the form
$n+k$. In order to force this denominator to be divisible by $5$, one must impose
\[
k\equiv 4\pmod 5.
\]
This leads to the following definition.

\begin{definition}
Let $n\equiv 1\pmod 5$, and let $a,b\in\N$. We define $\fabfive{n,a,b}$ to be
the small divisor $k$ of $a+bn$ such that
\[
k\equiv 4\pmod 5,
\qquad
5b \mid \frac{a+bn}{k}(n+k),
\qquad
5a \mid n\,\frac{a+bn}{k}(n+k),
\]
provided such a divisor exists. If no such divisor exists, we set
\[
\fabfive{n,a,b}=0.
\]
\end{definition}

Viewing the problem as one of representing the fraction as a sum of three unit fractions shows that many of the results established for decompositions with denominators divisible by \(4\) can be extended without difficulty to denominators divisible by other values. In particular, in the case of denominators divisible by \(5\), the analogues of Proposition~\ref{prop:fab} and Theorem~\ref{teo:Completeness} are the following results which can be proved identically to the corresponding results.

\begin{proposition}\label{prop:5dec}
Assume that $n\equiv 1\pmod 5$ and that $\fabfive{n,a,b}=k>0$. If
\[
q=\frac{a+bn}{k},
\]
then
\begin{equation}
\label{eq:fab5-decomp-one}
\frac{1}{n}
=
\frac{1}{n+k}
+
\frac{1}{\,q(n+k)/b\,}
+
\frac{1}{\,nq(n+k)/a\,}.
\end{equation}
Moreover, each of the three denominators
\[
n+k,\qquad \frac{q(n+k)}{b},\qquad \frac{nq(n+k)}{a}
\]
is divisible by $5$. Consequently, one obtains a decomposition of
\(\frac{5}{n}
\)
into three unit fractions.
\end{proposition}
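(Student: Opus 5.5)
The plan is to mirror the proof of Proposition~\ref{prop:fab}: first establish the algebraic identity \eqref{eq:fab5-decomp-one}, then read off divisibility by $5$ from the defining conditions of $\fabfive{n,a,b}$, and finally rescale.

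For the identity, I will use the divisor identity \eqref{eq:divisor-identity} with $A=n(n+k)$. With $q=(a+bn)/k\in\N$, which holds because $k\mid a+bn$, I set
\[
d=\frac ab\,n(n+k),\qquad d_1=\frac ba\,n(n+k).
\]
These satisfy $dd_1=A^2$, so they work at least as rational numbers. Alternatively I can skip the divisor framework and verify \eqref{eq:fab5-decomp-one} directly. The right-hand side of \eqref{eq:fab5-decomp-one} equals
\[
\frac{1}{n+k}+\frac{b}{q(n+k)}+\frac{a}{nq(n+k)}
=\frac{nq+bn+a}{nq(n+k)}.
\]
Substituting $bn+a=kq$, the numerator becomes $q(n+k)$, and the expression collapses to $1/n$. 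This is the same computation as \eqref{eq:special-identity}.

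Next I check integrality and divisibility by $5$, one denominator at a time.
\begin{itemize}
\item Since $n\equiv1\pmod5$ and $k\equiv4\pmod5$, we get $n+k\equiv0\pmod5$.
\item The condition $5b\mid q(n+k)$ says that $q(n+k)/b$ is an integer divisible by $5$.
\item The condition $5a\mid nq(n+k)$ says that $nq(n+k)/a$ is an integer divisible by $5$.
\end{itemize}
So all three denominators are positive integers that are multiples of $5$.

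Finally I rescale. Writing the denominators as $5x,5y,5z$ and multiplying \eqref{eq:fab5-decomp-one} by $5$ gives $5/n=1/x+1/y+1/z$.

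I expect no real obstacle. The only point needing care is that $d,d_1$ need not be integers, which is why I prefer the direct verification that avoids \eqref{eq:divisor-identity}. The substitution $kq=a+bn$ is the key step.
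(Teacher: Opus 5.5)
Your proof is correct and follows the paper's route: the paper proves this proposition \emph{identically} to Proposition~\ref{prop:fab}, i.e.\ via the computation \eqref{eq:special-identity} (which is exactly your direct check using $kq=a+bn$), followed by reading off divisibility by $5$ from $k\equiv4\pmod5$ and the two defining divisibility conditions. One slip in your aside, which is harmless since you rely on the direct verification: with $d=\frac ab\,n(n+k)$ and $d_1=\frac ba\,n(n+k)$, the identity \eqref{eq:divisor-identity} yields the different splitting $\frac{kb}{n(n+k)(a+b)}+\frac{ka}{n(n+k)(a+b)}$, whereas the choice that produces \eqref{eq:fab5-decomp-one} is $d=\frac ab(n+k)$, $d_1=\frac ba\,n^2(n+k)$, as in the proof of Proposition~\ref{prop:fab}.
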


\begin{theorem}[Completeness of $\fabfive{}$]\label{teo:complete5}
Let $n\equiv 1\pmod 5$, and suppose that
\[
\frac{1}{n}=\frac{1}{X}+\frac{1}{Y}+\frac{1}{Z},
\qquad 5\mid X,\quad 5\mid Y,\quad 5\mid Z.
\]
Then there exist
\(a,b\in\mathbb N\) and an admissible divisor \(k\equiv4\pmod5\) such that
the divisor identity \eqref{eq:fab5-decomp-one} associated with \(\fabfive{n,a,b}\) reproduces exactly the
given decomposition.
\end{theorem}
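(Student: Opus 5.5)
The plan is to copy the proof of Theorem~\ref{teo:Completeness} line by line, replacing the modulus $4$ by $5$. Since all three summands are positive, each denominator exceeds $n$, so $X>n$. Put $k=X-n$. Then $5\mid X$ and $n\equiv1\pmod5$ give $k\equiv -1\equiv 4\pmod5$, and $X=n+k$.

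Clearing denominators in $1/n-1/X=1/Y+1/Z$ gives $Z(kY-nX)=nXY$, which is the analogue of \eqref{eq:z-relation-english}. Set
\[
g=\gcdop(X,Y),\qquad b=\frac{X}{g},\qquad a=\frac{kY-nX}{g}.
\]
Since $Z>0$, we have $kY-nX>0$. It remains to see that $g\mid kY-nX$, and this holds because $g$ divides both $X$ and $Y$. Hence $a,b\in\N$.

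Next I would compute
\[
a+bn=\frac{kY-nX+nX}{g}=\frac{kY}{g}.
\]
This is divisible by $k$, so $k\mid a+bn$, and $q=(a+bn)/k=Y/g$. Then
\[
q(n+k)=\frac{Y}{g}\,X=bY,
\]
and $5\mid Y$ gives $5b\mid q(n+k)$. Dividing the relation $Z(kY-nX)=nXY$ by $g$ gives
\[
aZ=\frac{nXY}{g}=nq(n+k),
\]
and $5\mid Z$ yields $5a\mid nq(n+k)$. Thus $k$ satisfies all three admissibility conditions in the definition of $\fabfive{}$ for the pair $(a,b)$.

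Finally I would verify that \eqref{eq:fab5-decomp-one} returns $(X,Y,Z)$. The first denominator is $n+k=X$. The second is $q(n+k)/b=bY/b=Y$. The third is $nq(n+k)/a=aZ/a=Z$.

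The only delicate point is the same as in Remark following Theorem~\ref{teo:Completeness}. The definition of $\fabfive{}$ picks out a particular ("small") admissible divisor, which need not equal our $k$. The theorem only claims that $k$ is an admissible divisor, so that $\fabfive{n,a,b}>0$. The decomposition reproduced is the one attached to this admissible $k$, so I would phrase the conclusion in terms of admissibility rather than equality with $\fabfive{n,a,b}$. Apart from this, there is no real obstacle; the argument is purely algebraic and never uses the specific modulus except in $k\equiv -1$.
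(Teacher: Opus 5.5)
Your proposal is correct and follows the paper's route: the paper proves this theorem by repeating the proof of Theorem~\ref{teo:Completeness} with $4$ replaced by $5$, using the same choices $k=X-n$, $g=\gcd(X,Y)$, $b=X/g$, $a=(kY-nX)/g$, $q=Y/g$, and your verification of the three admissibility conditions and of the three denominators is that argument. Your caveat, that $k$ is shown to be an admissible divisor but need not be the value selected by $\fabfive{n,a,b}$, matches the paper's remark after Theorem~\ref{teo:Completeness}.
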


\begin{remark}[The numerator~5 polynomial family]
It may be useful to record that the polynomial surface \eqref{eq:polManolo}
behaves differently in the numerator~5 setting from the corresponding
numerator 4 subfamily.  The analogue  leads instead to the congruence condition
\[
  v\equiv w\equiv 4 \pmod 5.
\]
This family has no analogous square obstruction.  For instance,
\[
  P(3,14,9)=3\cdot14\cdot9-3-14=361=19^2.
\]
Thus the polynomial family with v and w congruent to 4 modulo 5 already
contains prime-square values.

Computationally, this family also appears to be very effective for primes in
the difficult numerator 5 residue class.  A finite sieve based on the identity
\[
  n+v=u(vw-1)
\]
shows that, up to the tested bound, the primes congruent to 1 modulo 5 are
almost always represented by
\[
  n=P(u,v,w),
  \qquad v\equiv w\equiv4\pmod5.
\]
In the range tested up to $10^6,$ the only missing primes are
\(  541,\,1381.
\)
This should be contrasted with the numerator~4 polynomial subfamily, where
square values are excluded for structural reasons.  The numerator~5 polynomial
family therefore seems worth mentioning, not as a replacement for the
$\fabfive$ framework, but as an additional comparison showing that the behaviour of
$P(u,v,w)$ depends strongly on the chosen congruence class. Observe that $\fabfive{541,1,2}=19$ and $\fabfive{1381,1,2}=9$, which yields
\[
  \frac{5}{541}  =  \frac{1}{3453744}  +  \frac{1}{3192}  +  \frac{1}{112},
\]
and 
\[
  \frac{5}{1381}
  =
  \frac{1}{117862826}  +  \frac{1}{42673}  +  \frac{1}{278}.
\]

Of course, $k=5\gamma+4$, $a=\alpha+1$, and $b=1$ are admissible parameters for $n=P(\alpha+1,5\beta+4,5\gamma+4)$ according to the function $\fabfive{n,a,b}$.
\end{remark}

Finally, the same parametrizing argument extends without essential change to a
general numerator \(m\geq 2\), provided one interprets the problem in terms of
decompositions of \(1/n\) whose three denominators are all divisible by \(m\).
In this setting, the congruence condition becomes
\[
k\equiv -n\pmod m,
\]
and the divisibility conditions are replaced by
\[
mb \mid \frac{a+bn}{k}(n+k),
\qquad
ma \mid n\,\frac{a+bn}{k}(n+k).
\]
Whenever such an admissible divisor \(k\) exists, the associated identity
produces a decomposition of \(1/n\) with all three denominators divisible by
\(m\), and hence, after scaling, a decomposition of \(m/n\) into three unit
fractions.

Thus the cases \(m=4,5\) are  particular instances of a more general
\(\mathrm{fab}_m\) parametrization. However, the assertion that the only
residue class requiring special attention is \(n\equiv 1\pmod m\) should not be
made for arbitrary \(m\). For \(m=4\) and \(m=5\) the complementary residue
classes are covered by elementary identities, and the same phenomenon also
occurs for \(m=6\). For larger numerators, however, the elementary treatment of
the complementary classes is no longer automatic: after subtracting the natural
first unit fraction one is led to a two-term Egyptian decomposition of
\(r/N\), with \(r\) possibly larger than \(4\), and this imposes additional
arithmetic conditions.

\section{Further Comparisons}

\label{sec:related-approaches}

We finish by recording two links between the divisor identity used in this paper
and related approaches to the Erd\H{o}s--Straus conjecture. The purpose of this
section is not to give a survey, but rather to clarify how our parametrization
interacts with Bradford's two-variable reduction and with Ventas' source-based
FCT construction.

\subsection*{Bradford's Reduction}

Bradford (\cite{Bra} and \cite{Bra2025}) observed that, once a solution is given, the third denominator is forced by the first two through a gcd identity: if \(p\equiv 1\) (mod 4) is a prime,  
\[
  \frac4p=\frac1x+\frac1y+\frac1z,
  \qquad x\le y\le z,
\]
then 
\begin{equation}
\label{eq:BradfordId}
z=\frac{xyp}{\gcd(y,p)\gcd(xy,x+y)}.
\end{equation}
By Theorem~\ref{teo:Completeness}, we have
\[
z=\frac{pxy}{ag},\quad a=\frac{ky-px}{g},
\]
where $g=\gcd(x,y)$. Then from Bradford's identity \eqref{eq:BradfordId} we obtain \begin{equation}
\label{eq:ParamA_Bradford}
  a=\frac{\gcd(y,p)\gcd(xy,x+y)}{\gcd(x,y)}
\end{equation}
and 
\begin{equation}
\label{eq:ParamD_Bradford}
  D=\gcd(y,p)\gcd(xy,x+y),
\end{equation}

In view of the symmetry between the formulations for \(4/n\) and \(5/n\) (see Proposition \ref{prop:5dec} and Theorem \ref{teo:complete5}), it is natural to expect the following analogue.
\begin{theorem}[Bradford's defect and the canonical $\fabfive{}$ parameters]
\label{lem:bradford-defect-fabfive-compact}
Let $p\equiv1\pmod5$ be prime, and let
\begin{equation}
\label{eq:Erdos-Straus5}
  \frac5p=\frac1x+\frac1y+\frac1z,
  \qquad x\le y\le z,
\end{equation}
be a solution. Put
\(  D_5=5xy-p(x+y).
\)
Then 
\begin{equation}
\label{eq:BradfordId5}
z=\frac{xyp}{\gcd(y,p)\gcd(xy,x+y)}.
\end{equation}
Therefore
\begin{equation}
\label{eq:ParamD_Bradford5}
  D_5=\gcd(y,p)\gcd(xy,x+y),
\end{equation}
and the parameter $a$ in the $\fabfive{}$ function  is given by
\begin{equation}
\label{eq:ParamA_Bradford5}
  a=\frac{\gcd(y,p)\gcd(xy,x+y)}{\gcd(x,y)}.
\end{equation}
\end{theorem}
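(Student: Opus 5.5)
The plan is to prove \eqref{eq:BradfordId5} and \eqref{eq:ParamD_Bradford5} together by a direct gcd computation, and then read off \eqref{eq:ParamA_Bradford5} from the construction in the proof of Theorem~\ref{teo:complete5}.

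\emph{Step 1: the exact formula for $z$.} Clearing denominators in \eqref{eq:Erdos-Straus5} gives
\[
z\,(5xy-p(x+y))=pxy,
\qquad\text{that is,}\qquad
z=\frac{pxy}{D_5}.
\]
Hence $D_5>0$, and \eqref{eq:BradfordId5} is equivalent to \eqref{eq:ParamD_Bradford5}. It therefore suffices to compute $D_5$.

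\emph{Step 2: reduction to one arithmetic identity.} Put $g=\gcd(xy,x+y)$ and write $xy=gs$, $x+y=gr$ with $\gcd(r,s)=1$. Then $D_5=g\,t$ with $t=5s-pr$, and Step~1 becomes $z=ps/t$, so $t\mid ps$. Since $p\ne5$ and $\gcd(r,s)=1$, we get
\[
\gcd(t,s)=\gcd(pr,s)=\gcd(p,s),
\qquad
\gcd(t,p)=\gcd(5s,p)=\gcd(s,p).
\]
Now split according to $\gcd(s,p)$.
\begin{itemize}
\item If $p\nmid s$, then $t$ is coprime to both $p$ and $s$ but divides $ps$, so $t=1$.
\item If $p\mid s$, then $p\mid t$ and $t/p$ divides $s$. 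Moreover $\gcd(t/p,s/p)=\gcd(r,s/p)=1$, so $t/p\mid p$. The case $t/p=p$ would force $p\mid s/p$, contradicting this coprimality. Hence $t=p$.
\end{itemize}
In both cases $D_5=g\cdot\gcd(xy/g,\,p)$.

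\emph{Step 3: identifying $\gcd(xy/g,p)$ with $\gcd(y,p)$.} This is the step where the hypotheses $x\le y\le z$ and the primality of $p$ are actually used, so I expect it to be the main point to get right. From $5/p\le 3/x$ we get $x\le 3p/5<p$, so $p\nmid x$.
\begin{itemize}
\item If $p\mid y$, then $p\nmid x+y$, so $p\nmid g$, and therefore $p\mid xy/g$.
\item If $p\nmid y$, then $p\nmid xy$, so certainly $p\nmid xy/g$.
\end{itemize}
Thus $\gcd(xy/g,p)=\gcd(y,p)$, which proves \eqref{eq:ParamD_Bradford5} and hence \eqref{eq:BradfordId5}.

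\emph{Step 4: the parameter $a$.} Apply the construction of Theorem~\ref{teo:complete5}, which mirrors the proof of Theorem~\ref{teo:Completeness}, with $X=5x$, $Y=5y$ and $n=p$. It gives $k=X-p=5x-p$, $G=\gcd(X,Y)=5\gcd(x,y)$, and $a=(kY-pX)/G$. Compute the numerator:
\[
kY-pX=5y(5x-p)-5px=5\,(5xy-p(x+y))=5D_5.
\]
Therefore $a=D_5/\gcd(x,y)$. Substituting \eqref{eq:ParamD_Bradford5} yields \eqref{eq:ParamA_Bradford5}.
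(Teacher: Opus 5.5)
Your Steps 1, 3 and 4 are correct; Step 4 is essentially the paper's closing argument. Step 2, however, contains a false claim. In the case $p\mid s$ you correctly obtain $t/p\mid s$ and $\gcd(t/p,s/p)=1$, hence $t\in\{p,p^2\}$. But $t=p^2$ does not force $p\mid s/p$: together with $\gcd(t/p,s/p)=1$ it only says $p\nmid s/p$, which is consistent.

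The case genuinely occurs. For $p=11$ and $(x,y,z)=(3,99,9)$ one has $g=3$, $s=99$, $r=34$, $t=121=p^2$, so $D_5=363\neq 33=\gcd(y,p)\gcd(xy,x+y)$. This triple satisfies everything your argument actually uses ($x\le y$ and $x\le z$, so $x\le 3p/5$ and $p\nmid x$). It violates only $y\le z$, which your proof never invokes. So your conclusion ``$D_5=g\cdot\gcd(xy/g,p)$ in both cases'' is false without that hypothesis.

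The missing step is this: if $t=p^2$, then $z=ps/t=xy/(gp)\le xy/p<y$ because $x\le 3p/5<p$, which contradicts $y\le z$. Hence $t=p$, and the rest of your proof goes through.

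The paper's proof passes over the same point. Its divisibility $D_5/\alpha\mid\gcd(xy,x+y)$ with $\alpha=p$ requires $D_5/p\mid x+y=5xy/p-D_5/p$. That follows from $D_5/p\mid xy$ only when $p^2\nmid D_5$, and ruling out $p^2\mid D_5$ needs exactly this use of $y\le z$. Apart from this, your route (factoring out $g=\gcd(xy,x+y)$ and showing $t=5s-pr\in\{1,p\}$) is a slightly more explicit version of the paper's two-sided divisibility argument.
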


\begin{proof}
By \eqref{eq:Erdos-Straus5} we have 
\begin{equation}
\label{eq:Erdos-StrausDesp}
D_5z=pxy.
\end{equation}
In particular, \(D_5\mid pxy\). Set \(\alpha=\gcd(D_5,p)\). As $p$ is a prime, $\alpha=1$ or $p$. If \(p\mid D_5\), as \(p>5\), we have \(p\mid x\) or \(p\mid y\). Since \(x\le y\le z\), the former condition and \eqref{eq:Erdos-Straus5} yield a contradiction. Thus, if \(p\mid D_5\), \(p\nmid x\) and \(p\mid y\). Therefore,
\[
\alpha=\gcd(D_5,p)=\gcd(y,p).
\]

As \(D_5\mid pxy\), we get
\begin{equation}
\label{eq:aux1}
\frac{D_5}{\alpha}\mid \gcd(xy,x+y).
\end{equation}

Let us prove the reverse divisibility. Obviously, 
\[
\gcd(xy,x+y)\mid D_5.
\]
We have proved that if \(\alpha=p\), then \(p\nmid x\) and \(p\mid y\). So $p\nmid x+y$ and $p\nmid \gcd(xy,x+y)\). Thus,
\[
\alpha\mid D_5
\]
Hence, we obtain
\begin{equation}
\label{eq:aux2}
\alpha \gcd(xy,x+y)\mid D_5.
\end{equation}
Therefore, combining \eqref{eq:aux1} and \eqref{eq:aux2} we get \eqref{eq:ParamD_Bradford5}
\[
D_5=\alpha \gcd(xy,x+y)=\gcd(y,p)\gcd(xy,x+y).
\]
This equation, together \eqref{eq:Erdos-StrausDesp} shows \eqref{eq:BradfordId5}.

The same argument as in the proof of Theorem~\ref{teo:Completeness}, applied to the \(\fabfive{}\) parametrization, gives
\[
z=\frac{pxy}{ag},
\]
and \eqref{eq:ParamA_Bradford5} follows.
\end{proof}

\subsection*{Ventas' FCT Sources}

Ventas' ceiling continued fraction construction (FCT, see \cite{Ven}) can also be interpreted inside the same divisor framework.  His source theorem assumes a divisor $d\equiv3\pmod4$ of an external source $p+i$, together with the divisibility condition $4i\mid p+d$.  This is exactly a sufficient condition for a certificate in the layer $b=1$ of our construction.

\begin{proposition}[FCT sources as $\fab{}$ certificates]
\label{prop:fct-as-fab-compact}
Let $p\equiv1\pmod4$ be prime.  Suppose that there exist $i,d\in\mathbb N$ such that
\[
  d\mid p+i,
  \qquad d\equiv3\pmod4,
  \qquad 4i\mid p+d.
\]
Then $d$ is an admissible divisor for $\fab{p,i,1}$.  If $q=(p+i)/d$, the corresponding identity is
\[
  \frac1p
  =\frac1{p+d}
   +\frac1{q(p+d)}
   +\frac1{pq(p+d)/i},
\]
and, after division of the three denominators by $4$, it gives
\[
  \frac4p=\frac1x+\frac1y+\frac1z,
\]
where
\[
  x=\frac{p+d}{4},
  \qquad
  y=\frac{(p+d)(p+i)}{4d},
  \qquad
  z=\frac{p(p+d)(p+i)}{4id}.
\]
\end{proposition}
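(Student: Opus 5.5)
We prove Proposition~\ref{prop:fct-as-fab-compact} by checking directly the three admissibility conditions of Definition~\ref{def:fab} with $n=p$, $a=i$, $b=1$, $k=d$, and then invoking Proposition~\ref{prop:fab}.

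\textbf{Admissibility.} First, $d$ divides $a+bn=p+i$ by hypothesis, and $d\equiv3\pmod4$. Write $q=(p+i)/d$.

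The second condition, $4b\mid q(p+d)$, becomes $4\mid q(p+d)$. This holds because $p\equiv1$ and $d\equiv3\pmod4$, so $4\mid p+d$.

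The third condition is $4i\mid p\,q(p+d)$, and it follows at once from $4i\mid p+d$. These checks are immediate; admissibility is all that is needed. The least such divisor may be smaller than $d$, but this is irrelevant because we use $d$ itself in the identity.

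\textbf{The identity.} Substituting $a=i$, $b=1$, $k=d$ into \eqref{eq:fab-decomp-one} gives the denominators
\[
p+d,\qquad q(p+d),\qquad pq(p+d)/i.
\]
The computation in the proof of Proposition~\ref{prop:fab} (identity \eqref{eq:special-identity}) uses only $kq=a+bn$, not the minimality of $k$. So it applies verbatim, and
\[
\frac1p=\frac1{p+d}+\frac1{q(p+d)}+\frac1{pq(p+d)/i}.
\]
Equivalently, one checks directly that $\frac{d}{p(p+d)}=\frac{d(i+p)}{p(p+d)(p+i)}$, which matches the sum of the last two terms after using $q=(p+i)/d$.

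\textbf{Integrality and the $4/p$ form.} Each denominator must be an integer divisible by $4$. For $p+d$ this was shown above. The second denominator $q(p+d)$ is an integer multiple of $p+d$, so it is also divisible by $4$. For the third, $(p+d)/i$ is an integer divisible by $4$ because $4i\mid p+d$, so $pq(p+d)/i$ is divisible by $4$.

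Dividing all three denominators by $4$ and multiplying the identity by $4$ gives $\frac4p=\frac1x+\frac1y+\frac1z$, with $x,y,z$ as stated, after substituting $q=(p+i)/d$. There is no real obstacle; the only point requiring care is the remark that Proposition~\ref{prop:fab} applies to any admissible divisor, not just the minimal one.
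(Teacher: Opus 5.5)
Your proposal is correct and follows essentially the same route as the paper: you verify the three admissibility conditions for $(a,b,k)=(i,1,d)$ and then read off the divisor identity of Proposition~\ref{prop:fab}. The differences are minor: you obtain $4\mid q(p+d)$ from $p+d\equiv 1+3\equiv 0\pmod 4$ rather than from $4i\mid p+d$, and you spell out the integrality check, the division by $4$, and the fact that minimality of $k$ plays no role, all of which the paper leaves implicit.
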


\begin{proof}
The conditions $d\mid p+i$ and $d\equiv3\pmod4$ give the divisor and congruence requirements.  With $q=(p+i)/d$, the hypothesis $4i\mid p+d$ implies
\[
  4\mid q(p+d),
  \qquad
  4i\mid pq(p+d).
\]
These are precisely the remaining admissibility conditions for $\fab{p,i,1}>0$.  The displayed identity is the divisor identity for $a=i$, $b=1$, and $k=d$.
\end{proof}

\begin{remark}Thus Ventas' condition is naturally contained in the $b=1$ layer of $\fab{}$.  Conversely, $\fab{p,i,1}$ is slightly more flexible: it only requires
\[
  4\mid q(p+k),
  \qquad 4i\mid pq(p+k),
  \qquad q=\frac{p+i}{k},
\]
so the factor $q$ may also contribute to the required divisibility.

There is also a geometric overlap.  If an FCT solution has coefficients $\lceil c_0,c_1,c_2\rceil$ and final numerator $p$, the negative recurrence gives
\[
  p=(c_1c_2-1)c_0-c_2=c_0c_1c_2-c_0-c_2.
\]
Hence, for $P(u,v,w)=uvw-u-v$,
\[
  p=P(c_2,c_0,c_1),
\]
so the FCT grid lies on the same cubic surface, up to a permutation of variables.
\end{remark}

These two comparisons indicate that the divisor identity is not merely
another parametrization, but a convenient common language in which several
apparently different constructions can be expressed.

\begin{remark}
We observe that the preceding result carries over without difficulty to the
setting of Sierpiński's conjecture. Indeed, for those values
\(n\equiv 1 \pmod 5\) arising from the polynomial surface
\[
  P(u,v,w)=uvw-u-v,
\]
with \(v,w\equiv 4 \pmod 5\), the fractions \(5/n\) also admit
representations as sums of three unit fractions.
\end{remark}

\section{Translation Invariance, Modular Sieving, and Computation}
\label{sec:translation-sieving}

The choice function $\fab{}$ has a simple translation invariance that is useful in
modular sieving.

\begin{proposition}[Translation invariance]
\label{prop:translation}
Suppose $\fab{n,a,b}=k>0$ and put
\[
        n_1=n+4abk.
\]
Then the same $k$ remains an admissible divisor for the congruence and
divisibility tests defining $\fab{n_1,a,b}$.
\end{proposition}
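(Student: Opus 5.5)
The plan is to check the three admissibility conditions of Definition~\ref{def:fab} directly for $n_1=n+4abk$, keeping $k$, $a$, $b$ fixed. The key observation is that every quantity in the conditions changes by an additive multiple of $4a$ or $4b$. Throughout, write $q=(a+bn)/k$, which is an integer because $k$ is an admissible divisor for $n$.

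First, $k$ still divides $a+bn_1$. Indeed
\[
a+bn_1=a+bn+4ab^2k=k\,(q+4ab^2),
\]
so $q_1:=(a+bn_1)/k=q+4ab^2$ is an integer. The congruence $k\equiv3\pmod 4$ involves only $k$ and is untouched. We also record
\[
n_1+k=(n+k)+4abk.
\]

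Second, for the condition $4b\mid q_1(n_1+k)$, expand
\[
q_1(n_1+k)=(q+4ab^2)\bigl((n+k)+4abk\bigr)=q(n+k)+4abk\,q+4ab^2\,(n_1+k).
\]
The first term is divisible by $4b$ by the hypothesis $\fab{n,a,b}=k$. The other two terms visibly contain the factor $4b$.

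Third, for the condition $4a\mid n_1q_1(n_1+k)$, write the product as $(n+X)(q+Y)\bigl((n+k)+Z\bigr)$ with
\[
X=4abk,\qquad Y=4ab^2,\qquad Z=4abk.
\]
Each of $X$, $Y$, $Z$ is a multiple of $4a$. Expanding, every term except $nq(n+k)$ contains at least one of $X,Y,Z$ as a factor, so it is divisible by $4a$. The remaining term $nq(n+k)$ is divisible by $4a$ by hypothesis.

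Hence $k$ is admissible for $(n_1,a,b)$. It follows that $0<\fab{n_1,a,b}\le k$; the statement asserts admissibility only, not minimality. There is no real obstacle here. The only point needing care is to note that $q$ shifts by $4ab^2$, not by $4ab$, and to confirm that this shift is still a multiple of both $4a$ and $4b$.
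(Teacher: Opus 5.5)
Your proof is correct and follows the paper's own argument: the paper likewise observes that $(a+bn_1)/k=(a+bn)/k+4ab^2$ and $n_1+k=n+k+4abk$, then concludes that the tests are preserved. You additionally write out the expansions showing that the extra terms are multiples of $4b$ and $4a$, and you correctly note that only admissibility, and hence $0<\fab{n_1,a,b}\le k$, is obtained rather than minimality.
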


\begin{proof}
Since $k\mid a+bn$, one has
\[
        a+bn_1=a+bn+4ab^2k,
\]
so $k\mid a+bn_1$. Moreover
\[
        \frac{a+bn_1}{k}=\frac{a+bn}{k}+4ab^2,
        \qquad
        n_1+k=n+k+4abk.
\]
Thus the defining congruence tests are preserved.
\end{proof}

As an immediate consequence of the translation invariance, one obtains the
following modular sieving algorithm, which is useful for bounded modular sieving.

\begin{remark}[Modular sieving]
Proposition~\ref{prop:translation} gives a practical way to organize the bounded
search.  For a fixed bound $C$, one may work modulo a convenient modulus $m$ and
remove a residue class $r\pmod m$ as soon as a certificate
\[
        \fab{r,a,b}=k>0,
        \qquad
        1\le a,b\le C,
        \qquad
        4abk\mid m
\]
is found.  The remaining residue classes are the only classes that still need to
be explored when looking for values not detected by the bounded window
$1\le a,b\le C$.  If $m_0\mid m$, survivor classes modulo $m_0$ can be lifted to
classes modulo $m$ by testing the finitely many residues
\[
        r+\ell m_0\pmod m,
        \qquad
        0\le \ell < m/m_0.
\]
This is the modular principle behind the computational sieve used below.
\end{remark}

\begin{remark}[Modular inverse role]
Suppose that \(a\) and \(b\) are coprime to \(k\equiv 3\pmod 4\). Let $\overline{a}$ and $\overline{b}$ be such that $a\overline{a}\equiv 1$ and $b\overline{b}\equiv 1$ (mod $k$). The condition
\(k\mid a+bn\) is equivalent to
\[
n\equiv -a\overline{b}\pmod k,
\]
that is, \(k\mid \overline{b}+\overline{a}n\). Thus, when considering the divisors
\(k\equiv 3\pmod 4\) of \(a+bn\), it is not only natural to ask whether
\eqref{eq:fab-conditions} holds for such a \(k\), but also whether
\begin{equation*}
        4\overline{a}\mid \frac{\overline{b}+\overline{a}n}{k}(n+k),
        \qquad
        4\overline{b}\mid n\frac{\overline{b}+\overline{a}n}{k}(n+k).
\end{equation*}

If \(k\mid a+bn\), but either
\[
4a\nmid \frac{a+bn}{k}(n+k)
\qquad\text{or}\qquad
4b\nmid \frac{a+bn}{k}(n+k)n,
\]
then one may try to shift the values of \(a\) and \(b\) by suitable multiples of \(k\), while preserving the congruence \(k\mid a+bn\), in order to make both divisibility conditions hold. For large \(n\), this approach appears to provide a promising strategy for finding admissible triples \((a,b,k)\) associated with a given value of \(n\).
\end{remark}

\begin{remark}[Composite exceptional values for the bounded search]\label{re:compExc4} The 
bounded search for composite \(n\equiv 1 \pmod 4\) analogous to Remark \ref{rem:contrast} behaves differently.
If one restricts Definition~1 to the window \(1\leq a,b\leq 11\), then
some composite values are not detected in that window, although they are
detected as soon as one allows one of the parameters to be slightly larger.
The only numbers, which are not square, that require $a$ or $b$ greater than 11 up to $10^{14}$ appear in Table \ref{TableNoWindow4}. Here \(\operatorname{comab}(n)=(a,b,k)\) means that \(k,a,b\) are  admissible
values for $n$.

\begin{table}[ht]
\centering
\resizebox{0.7\textwidth}{!}{%
\begin{tabular}{c c c }
\toprule
$n$ & small factor of $n$ & $\operatorname{comab}(n)$\\
\midrule
$68889266161$ & $43969$ & $(12,1,2639)$ \\
$198670395169$ & $38791$ & $(1,13,1171)$ \\
$2081413401769$ & $13$ & $(5,12,189415991)$ \\
$32140320080401$ & $13$ & $(13,1,7)$ \\
$42042659579881$ & $1699$ & $(7,15,770953415279)$ \\
$58797131752129$ & $19$ & $(7, 12, 25015620671)$ \\
$87574118340241$ & $13$ & $(13, 1, 7)$\\
\bottomrule
\end{tabular}
}
\caption{This table lists the only nonsquare integers \(n \le 10^{14}\)
for which \(\fab{n,a,b}\) requires parameters \(a,b\) outside the window
\(\max\{a,b\}\le 11\).} \label{TableNoWindow4}
\end{table}

These examples do not contradict the prime computation, since they are
composite rather than prime. They suggest, however, that the small window
\(1\leq a,b\leq 11\) should be regarded as a phenomenon of the prime search,
not as a uniform bound for all composite integers.
\end{remark}

\begin{remark}[Greedy survivor sequences] \label{re:PrimeStairs}
The translation-invariant sieve gives rise to natural greedy survivor
sequences.  Starting from the single sieve value $k=3$, one repeatedly takes
the least prime, or composite, not detected by the current finite sieve and
then adjoins the least new admissible $k\equiv3\pmod4$ which detects it.  The
first prime survivors are
\[
73,1129,1201,3361,5569,9241,14401,\ldots,
\]
whereas the first composite survivors, with the prime-square rule $a=p$ for
$n=p^2$, are
\[
49,1369,1849,2641,5161,6241,11089,\ldots .
\]
The largest prime survivor below \(10^7\) found in our computations is
\(8803369\), which requires \(k=107\). These sequences seem to encode the early growth of the modular sieve associated
with $\fab{}$.
\end{remark}

\begin{remark}\label{re:compExc5}Using the choice function \(\fabfive{n,a,b}\) to decompose \(\frac{5}{n}\),
we work within the window \(\max\{a,b\}\le 9\). The only integers
up to \( 10^{14}\) that require values outside this window are in Table \ref{TableNoWindow5}.

\begin{table}[ht]
\centering
\resizebox{0.6\textwidth}{!}{%
\begin{tabular}{c c c }
\toprule
$n$ & smallest factor of $n$ & $(a,b)$\\
\midrule
$305945641$ & $n$ (\text{prime}) & $(10,1)$ \\
$2965123604521$ & $13$ & $(2,11)$ \\
$7171425327781$ & $11$ & $(11,1)$ \\
$2095120616401$ & $739$ & $(10,3)$ \\
$4269339137701$ & $11$ & $(11,1)$ \\
$11471606546401$ & $37$ & $(10,1)$ \\
$45931894495201$ & $11$ & $(1,10)$\\
$63712786956841$ & $11$ & $(1,10)$\\
$80611085041201$ & $11$ & $(10,3)$\\
\bottomrule
\end{tabular}
}
\caption{This table lists the only integers \(n \le 10^{14}\) for which
\(\fabfive{n,a,b}\) requires a choice of parameters \(a,b\) outside the
window \(\max\{a,b\}\le 9\).} \label{TableNoWindow5}
\end{table}

\end{remark}

All computations reported in Remarks \ref{rem:contrast}, \ref{re:compExc4}, \ref{re:PrimeStairs}, and \ref{re:compExc5} were performed
by an independent implementation of the admissibility conditions in
Definition \ref{def:fab}; the code is available from the authors upon request.

\section*{Acknowledgments and AI Tool Disclosure.}

Research of M. B-H. is supported in part by the grant PID2022--138342NB--I00 AEI, from Spanish Government.

ChatGPT 5.5 was used for proofreading. Outside of this AI tool use, the mathematics and the text of this paper were both human-generated.

\newpage


\noindent
\textbf{M. Bello-Hernández}, corresponding author. Departamento de Matemáticas y Computación, Universidad de La Rioja. c/ Madre de Dios, 53, 26006 Logroño, La Rioja, Spain. Research supported in part by the grant PID2022--138342NB--I00 AEI, from Spanish Government. \\\texttt{mbello@unirioja.es}

\vspace{0.3cm}

\noindent
\textbf{M. Benito}, retired Professor. Instituto P. M. Sagasta, Logro{\~n}o and Departamento de Matemáticas y Computación, Universidad de La Rioja. c/ Madre de Dios, 53, 26006 Logroño, La Rioja, Spain. \\ \texttt{mbenitomunnoz@gmail.com}

\vspace{0.3cm}

\noindent
\textbf{E. Fernández}, retired Professor. Instituto P. M. Sagasta, Logro{\~n}o and Departamento de Matemáticas y Computación, Universidad de La Rioja. c/ Madre de Dios, 53, 26006 Logroño, La Rioja, Spain. \\\texttt{emilio.fernandez@unirioja.es}

\end{document}